\def\margin_comment#1{\marginpar{\sffamily{\tiny #1\par}\normalfont}}
\tikzset{join/.code=\tikzset{after node path={%
\ifx\tikzchainprevious\pgfutil@empty\else(\tikzchainprevious)%
edge[every join]#1(\tikzchaincurrent)\fi}}}
\tikzset{>=stealth',every on chain/.append style={join},
         every join/.style={->}}
\tikzstyle{labeled}=[execute at begin node=$\scriptstyle,
\newtheorem{thm}{Theorem}[section]
\numberwithin{equation}{section} 
\numberwithin{figure}{section} 
\theoremstyle{plain}
\newtheorem*{thm*}{Theorem}
\theoremstyle{definition}
\theoremstyle{plain}
\newtheorem{thm_A}{Theorem}
\newtheorem*{defn*}{Definition}
\theoremstyle{plain}
\theoremstyle{plain} 
\theoremstyle{plain}
\newtheorem{prop}[thm]{Proposition} 
\theoremstyle{remark}
\theoremstyle{remark}
\newtheorem{rem}[thm]{Remark}
\theoremstyle{plain}
\theoremstyle{plain}
\theoremstyle{plain}
\newtheorem{lem}[thm]{Lemma}
\newtheorem*{lem*}{Lemma} 
\theoremstyle{definition}
\newtheorem{defn}[thm]{Definition}
\newtheorem*{acknowledgment*}{Addentum}
\theoremstyle{plain}
\newtheorem*{ex*}{Example}
\theoremstyle{plain}
\begin{document}
\title{Left orders in Garside groups}
\author{Fabienne Chouraqui}
\maketitle
\begin{abstract}
We consider the structure group of a non-degenerate  symmetric (non-trivial) set-theoretical solution of the quantum Yang-Baxter equation. This is a Bieberbach group and also a Garside group. We show  this group is not bi-orderable, that is it does not admit a total order which is invariant under left and right multiplication. Regarding the existence of a left invariant total ordering, there is a great diversity. There exist   structure groups with a recurrent left order and with  space of left orders homeomorphic to the  Cantor set, while there exist others that are even not unique product groups.
\end{abstract}
\section*{Introduction}
A group $G$ is \emph{left-orderable} if there exists a strict  total ordering  $\prec$ of its elements which  is invariant under left multiplication, that is  $g \prec h$  implies $fg \prec fh$ for all $f,g,h$ in $G$. If the order $\prec$ is also invariant under right multiplication, then $G$ is said to be \emph{bi-orderable}. The braid group $B_n$,  with $n \geq 3$ strands, is left-orderable but  not bi-orderable \cite{deh_braids_order}, and if $n \geq 5$ none of these orders is Conradian \cite{rhem-rolfsen}. In \cite{deh-book-order}, the  question whether every Garside group is left-orderable is  araised (Question 3.3, p.292, also in \cite{deroin}). It is a very natural question as the Garside groups extend the braid groups in many respects and  it motivated  our research in the  context of  the structure group of a non-degenerate  symmetric set-theoretical solution of the quantum Yang-Baxter equation. This group is  a Garside group that satisfies many interesting properties \cite{chou_art}, \cite{ddkgm}. In this note, we show this group is not bi-orderable and we find the question whether it is left-orderable has  a wide range of answers. We state our main results and  refer to Sections 1,2 for definitions:

\begin{thm_A}\label{thm_intro_1}
Let $G(X,S)$ be the structure group of a non-degenerate  symmetric (non-trivial) set-theoretical solution $(X,S)$ of the quantum Yang-Baxter equation. Then $G(X,S)$ is not bi-orderable. Furthermore, $G(X,S)$ has generalized torsion elements.
\end{thm_A}

\begin{thm_A}\label{thm_intro_2}
Let $G(X,S)$ be the structure group of a non-degenerate,  symmetric (non-trivial) set-theoretical solution $(X,S)$ of the quantum Yang-Baxter equation. Assume $(X,S)$ is a retractable solution and $\mid X \mid \geq 3$. Then \\
$(i)$ $G(X,S)$ has a recurrent left order.\\
$(ii)$ The space of left orders of $G(X,S)$ is  homeomorphic to the  Cantor set.\\
$(iii)$  $G(X,S)$ has an infinite number  of Conradian  left orders. 
\end{thm_A}
Note that under the assumptions of Theorem \ref{thm_intro_2}, $G(X,S)$ is locally indicable (each non-trivial finitely generated subgroup has a quotient isomorphic to $\mathbb{Z}$), as the existence of a  recurrent left order implies local indicability \cite{morris}.  In contrast, for $n \geq 5$, no braid group $B_n$ is  locally indicable \cite{deh-book-order}[p.287] and hence $B_n$ has no recurrent left order like most of the left-orderable groups.   E. Jespers and J. Okninski prove the structure group of a retractable solution is poly-(infinite)cyclic \cite{jespers_book}[p.223] and poly-(infinite)cyclic   implies locally indicable. \\
Here is an outline of the paper. Section 1 provides some standard definitions on orderable groups. Section 2 introduces the structure group of a non-degenerate,  symmetric set-theoretical solution $(X,S)$ of the quantum Yang-Baxter equation. Section 3 proves Theorem \ref{thm_intro_1}. Section 4 proves Theorem \ref{thm_intro_2} and concludes the paper with some remarks and questions. 
\section{Preliminaries on groups ordering}
We introduce some definitions and refer to   \cite{book-order}, \cite{glass}, \cite{deh-book-order}, \cite{remtulla}, \cite{deroin}, \cite{levi}, \cite{linnel}  and survey \cite{rolfsen}. A group $G$ is \emph{left-orderable} if there exists a  strict total ordering  $\prec$ of its elements which  is invariant under left multiplication, that is  $g \prec h$  implies $fg \prec fh$ for all $f,g,h$ in $G$.  If a group $G$ is left-orderable, then it satisfies the  unique product property, that is for any finite subsets $A,B \subseteq G$, there exists at least one element $x \in AB$ that can be uniquely written as $x=ab$, with $a \in A$ and  $b \in B$. We call  a strict total ordering   which  is invariant under left multiplication a  \emph{left order}. The  \emph{positive cone of} a left order $\prec$ is defined by $P=\{g \in G \mid 1 \prec g\}$ and it satisfies:\\
$(1)$ $P$ is a semigroup, that is $P\cdot P \subseteq P$\\
$(2)$ $G$ is partitioned by $P$, that is $G= P \cup P^{-1} \cup \{1\}$ and  $P \cap P^{-1}=\emptyset$\\
 Conversely, if there exists a subset $P$ of $G$ that satisfies $(1)$ and $(2)$, then $P$ determines a unique  left order $\prec$ defined by $g \prec h$ if and only if $g^{-1}h \in P$. A subgroup $N$ of a left-orderable group $G$ is called \emph{convex} (with respect to $\prec$), if for any $x,y,z \in G$ such that $x,z \in N$ and $x \prec y \prec z$, we have $y \in N$. A left order $\prec$ is \emph{Conradian} if for any strictly positive elements  $a,b \in G$, there is  $n\in \mathbb{N}$ such that $b \prec ab^n$. A left-orderable group $G$ is called \emph{Conradian} if it admits a Conradian left order. Conradian left-orderable groups share many of the properties of the bi-orderable groups. A left order $\prec$ in a countable group $G$ is \emph{recurrent (for every cyclic subgroup)} if for every  $g \in G$ and every finite increasing sequence $h_1 \prec h_2\prec ... \prec h_r$ with  $h_i \in G$, there exists $n_i \rightarrow \infty$ such that $\forall i, \;h_1 g^{n_i}\prec h_2 g^{n_i}\prec ... \prec h_r g^{n_i}$ (see \cite{morris}[Defn.3.2], \cite{linnel_morris}[Defn.3.1]). A recurrent left order is Conradian \cite{morris}. 
 
 The set of all left orders of a group $G$ is denoted by $LO(G)$ and it is a topological space (compact and  totally disconnected with respect to the topology induced by the Tychonoff topology on the power set of $G$) \cite{sikora}. If $G$ is left-orderable, it acts on  $LO(G)$ by conjugation: the image of  $\prec$ under $g \in G$ is  $\prec_g \in LO(G)$ defined by $a \prec_g b$ if and only if $gag^{-1} \prec gbg^{-1}$.  
  A left order $\prec$ is \emph{finitely determined} if there is a finite subset $\{g_1,g_2,...,g_k\}$ of $G$ such that $\prec$ is the unique left-invariant ordering of $G$ satisfying $1 \prec g_i$ for $1 \leq i\leq k$. A finitely determined left order $\prec$ is also called \emph{isolated}, since  $\prec$ is finitely determined if and only if it is not a limit point of $LO(G)$. If the positive cone of $\prec$ is a finitely generated semigroup, then $\prec$ is isolated.
   The set  $LO(G)$ cannot be countably infinite \cite{linnel}.  If $G$ is a countable left-orderable group,   $LO(G)$ is either finite, or homeomorphic to the Cantor set, or homeomorphic  to a subspace of the Cantor space with isolated points. Furthermore, $LO(G)$ is homeomorphic to the Cantor set if and only if it is nonempty and no left-invariant ordering of $G$ is isolated \cite{deh-book-order}[p.267].  If $G$ is a countable and virtually solvable left-orderable group,  then  $LO(G)$  is either finite or a Cantor set \cite{rivas}.
An element $x$ of a subset $F$ of $G$ is an \emph{extreme point of $F$} if, for all $g \in
 G\setminus \{1\}$, either $ga$ or $g^{-1}a$ is not in $F$. A group  $G$  is \emph{diffuse} if every non-empty finite subset $F$ of $G$  has an  extreme point \cite{bowditch}, \cite{linnel_morris}.  A diffuse group satisfies the unique product property  \cite{bowditch}. This notion is strictly weaker than left-orderability  \cite{rimbaut} and it is equivalent to the notion of locally invariant left order \cite{linnel_morris}. We refer to  \cite{linnel_morris}[Section 6].
\section{Set-theoretical solutions of the Yang-Baxter equation}\label{sec_QYBE}
\label{subsec_qybe_Backgd}
Fix a finite dimensional vector space~$V$ over the field~$\mathbb{R}$. The quantum Yang-Baxter equation (QYBE) on~$V$ is the equality $R^{12}R^{13}R^{23}=R^{23}R^{13}R^{12}$ of linear transformations on $V  \otimes V \otimes V$, where ~$R: V  \otimes V\to V  \otimes V$ is a linear operator and $R^{ij}$ means $R$ acting on the $i$th and $j$th components. A \emph{set-theoretical solution} of this equation is a pair~$(X,S)$ such that $X$ is a basis for $V$ and $S : X \times X \rightarrow X \times X$ is a bijective map.
The map $S$ is defined by  $S(x,y)=(g_{x}(y),f_{y}(x))$, where $f_x, g_x:X\to X$ are functions  for all  $x,y \in X$. The pair~$(X,S)$ is \emph{non-degenerate} if for any  $x\in X$, $f_{x}$ and $g_{x}$ are bijective. It is  \emph{involutive} if $S\circ S = Id_X$, and  \emph{braided} if $S^{12}S^{23}S^{12}=S^{23}S^{12}S^{23}$, where the map $S^{ii+1}$ means $S$ acting on the $i$-th and $(i+1)$-th components of $X^3$.
It is said to be \emph{symmetric} if  it is involutive and braided.
 Let $\alpha:X \times X \rightarrow X\times X$ be defined by $\alpha(x,y)=(y,x)$, and let $R=\alpha \circ S$, then $R$ satisfies the QYBE if and only if  $(X,S)$ is  braided. We follow~\cite{etingof}, \cite{jespers_book} and refer to \cite{gateva_van,jespers} for more details, and  to \cite{chou_art,chou_godel2,chou_coxeter} for examples.
\begin{defn} \label{def_struct_gp} Let $(X,S)$ be a non-degenerate symmetric set-theoretical solution. The \emph{structure group} of $(X,S)$ is $G(X,S)=\operatorname{Gp} \langle X\mid\ xy = g_x(y)f_y(x)\ ;\ x,y\in X \rangle\label{equation:structuregroup1}$. 
\end{defn}
 \begin{defn}\label{def_decomposale}\cite{etingof}
   Let $(X,S)$ be  a non-degenerate symmetric solution.\\
  $(i)$ A subset $Y$ of $X$ is an \emph{invariant} subset if $S(Y \times Y)\subseteq Y \times Y$.\\
  $(ii)$ An invariant subset $Y$  is \emph{non-degenerate} if $(Y,S\mid_{Y\times Y})$ is  non-degenerate and symmetric. \\
  $(iii)$ The solution $(X,S)$ is  \emph{decomposable} if $X$ is the union of two non-empty disjoint non-degenerate invariant subsets. Otherwise, $(X,S)$ is \emph{indecomposable}.
  \end{defn} 
    A solution $(X,S)$ is \emph{trivial} if $g_{x}=f_{x}=Id_X$, $\forall x \in X$, and  its structure group is $\mathbb{Z}^{\mid X \mid}$. Non-degenerate symmetric solutions (up to equivalence) are in one-to-one correspondence with quadruples $(G,X,\rho, \pi)$, where $G$ is a group, $X$ is a set, $\rho$ is a left action of $G$ on $X$, and $\pi$ is a bijective $1$-cocycle of $G$ with coefficients in $\mathbb{Z}^{X}$, where $\mathbb{Z}^{X}$ is the free abelian group spanned by $X$ \cite{etingof}. Indeed, $G(X,S)$ is naturally a subgroup of $\operatorname{Sym(X)} \ltimes \mathbb{Z}^{X}$, such that the $1$-cocycle defined by the projection $G(X,S) \rightarrow \mathbb{Z}^{X}$ is bijective. The product in $\operatorname{Sym(X)} \ltimes \mathbb{Z}^{X}$ is defined by: $f_x^{-1}t_x \, f_y^{-1}t_y= f_x^{-1} f_y^{-1}t_{f_y(x)}t_y$.
      More precisely:
  \begin{thm}\cite{etingof}\label{prop_etingof}
   Let $(X,S)$ be  a non-degenerate symmetric  solution and $G(X,S)$ be its structure group. Let $\operatorname{Sym(X)} $ be the group of permutations of $X$ and $\mathbb{Z}^{X}$ be the free abelian group spanned by $X$. Let the map $\phi_f: G(X,S) \rightarrow  \operatorname{Sym(X)} \ltimes \mathbb{Z}^{X}$ be defined by $\phi_f(x)= f_{x}^{-1}\,t_x$, where $x \in X$ and $t_x$ is  the generator of $\mathbb{Z}^{X}$ corresponding to $x$. Then\\
   $(i)$ The assignment $x \mapsto f_x^{-1}$ is a left action of $G(X,S)$ on $X$.\\
    $(ii)$ Let $a \in G(X,S)$ and $w=m_1t_1+m_2t_2+...+m_nt_n \in \mathbb{Z}^{X}$. Assume  $a$ acts on $X$ via the permutation $f$. Then $a$ acts on $\mathbb{Z}^{X}$ in the following way: $a \bullet t_x= t_{f(x)}$ and  $a \bullet w= m_1t_{f(1)}+m_2t_{f(2)}+...+m_nt_{f(n)}$,  where $\bullet$ denotes the extension of the left action  of $G(X,S)$ on $X$ defined in $(i)$ to $\mathbb{Z}^{X}$.\\
   $(iii)$ The map  $\phi_f$  is a monomorphism. \\
   $(iv)$ The map $\pi: G(X,S) \rightarrow \mathbb{Z}^{X}$, defined by $\pi(g)= w$ if $\phi_f(g)= \alpha\, w$, with $\alpha \in \operatorname{Sym(X)}$ and  $w \in \mathbb{Z}^{X}$,  is a bijective $1$-cocycle satisfying $\pi(a_1a_2)=(a_2^{-1}\bullet \pi(a_1))\,+\pi(a_2)$.
  \end{thm}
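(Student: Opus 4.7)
The plan is to reduce (i)--(iii) to direct identities extracted from the braided and involutive hypotheses on $S$, then to recognize (iv) as a consequence of the semidirect-product multiplication, with the bijectivity of $\pi$ being the real content.

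For (i), I would unpack the braided relation $S^{12}S^{23}S^{12} = S^{23}S^{12}S^{23}$ on a triple $(x,y,z) \in X^3$. Comparing the third coordinates yields the functional identity
\[
f_z \circ f_y \;=\; f_{f_z(y)} \circ f_{g_y(z)}.
\]
Specializing $(y,z)$ to the pair $(x,y)$ and inverting gives $f_x^{-1} f_y^{-1} = f_{g_x(y)}^{-1} f_{f_y(x)}^{-1}$, which is exactly the statement that the assignment $x \mapsto f_x^{-1}$ respects the defining relation $xy = g_x(y) f_y(x)$ of $G(X,S)$. Since each $f_x$ is invertible, this extends to a left action on $X$, proving (i); extending $\mathbb{Z}$-linearly to $\mathbb{Z}^X$ proves (ii). For the well-definedness part of (iii), I would apply the product rule $f_x^{-1}t_x \cdot f_y^{-1}t_y = f_x^{-1}f_y^{-1} t_{f_y(x)} t_y$ to both sides of the relation: the braided identity above matches the $\mathrm{Sym}(X)$-components, while the involutive condition $S \circ S = \mathrm{Id}$, which translates to $f_{f_y(x)}(g_x(y)) = y$, matches the $\mathbb{Z}^X$-components. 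Hence $\phi_f$ is a well-defined homomorphism.

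The 1-cocycle identity of (iv) is immediate from the same product rule: if $\phi_f(a_i) = \alpha_i w_i$ with $\alpha_i \in \mathrm{Sym}(X)$ and $w_i \in \mathbb{Z}^X$, then $\phi_f(a_1 a_2) = \alpha_1 \alpha_2\,(\alpha_2^{-1}\bullet w_1 + w_2)$, and $\alpha_2^{-1}$ acts on $\mathbb{Z}^X$ precisely as $a_2^{-1}$ via the action of (ii). The main obstacle is the bijectivity of $\pi$ (which also yields injectivity of $\phi_f$, completing (iii)). I would address it through the structure monoid $M(X,S)$, presented by the same relations as $G(X,S)$: the image of $\pi$ on $M(X,S)$ lies in $\mathbb{Z}^X_{\geq 0}$, and on a product $x_1 \cdots x_n$ of generators the cocycle identity yields $\pi(x_1 \cdots x_n) = t_{x_1'} + \cdots + t_{x_n'}$ for some permutation of the $x_i$. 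This delivers surjectivity onto $\mathbb{Z}^X_{\geq 0}$; injectivity follows from a normal-form and degree-counting argument exploiting that the defining relators are length-preserving, which is the heart of the I-type / Garside structure. Finally, $M(X,S)$ embeds into $G(X,S)$ (via the group-of-fractions construction cited in Jespers--Okni{\'n}ski), and every group element has the form $m_1 m_2^{-1}$; the cocycle identity promotes the set-bijection $M(X,S) \to \mathbb{Z}^X_{\geq 0}$ to a set-bijection $G(X,S) \to \mathbb{Z}^X$.

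The hard step is this bijectivity: the rest consists of formal manipulations of the braided-involutive identities and the semidirect product, but showing that $G(X,S)$ has exactly the right number of elements at each $\mathbb{Z}^X$-level is a genuine structural fact requiring the normal-form analysis of $M(X,S)$. I would therefore focus the bulk of the proof on this monoid analysis and deduce all the remaining assertions as corollaries.
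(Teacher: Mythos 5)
This theorem is imported from Etingof--Schedler--Soloviev: the paper states it with the citation \cite{etingof} and gives no proof of its own, so there is no in-paper argument to compare against; I can only assess your proposal on its merits. The formal portions are correct and well organized: comparing third coordinates in the braid relation does give $f_z\circ f_y=f_{f_z(y)}\circ f_{g_y(z)}$, whose inverted specialization is exactly the compatibility of $x\mapsto f_x^{-1}$ with the relator $xy=g_x(y)f_y(x)$, and the involutivity identity $f_{f_y(x)}(g_x(y))=y$ is precisely what matches the $\mathbb{Z}^X$-components under the stated product rule, so $\phi_f$ is a well-defined homomorphism and the cocycle identity in (iv) follows from the semidirect-product multiplication as you say.

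The gap is in the step you yourself flag as the hard one: bijectivity of $\pi$. Your degree computation only shows that a length-$k$ positive word maps to a sum of $k$ generators $t_{y_i}$ (and note these $y_i$ are images of the $x_i$ under various $f$'s, not ``some permutation of the $x_i$'' as written); it does \emph{not} ``deliver surjectivity onto $\mathbb{Z}^X_{\geq 0}$'', since one must still show every nonnegative vector of degree $k$ is hit, and injectivity is deferred to an unspecified ``normal-form and degree-counting argument.'' That missing argument is exactly the theorem that the structure monoid is of $I$-type (Gateva-Ivanova--Van den Bergh, \cite{gateva_van}), equivalently that the number of elements of $M(X,S)$ of length $k$ equals $\binom{n+k-1}{k}$; it is where essentially all the content of parts (iii) and (iv) resides, and without it the proof is an outline rather than a proof. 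The final localization step (passing from the bijection $M(X,S)\to\mathbb{Z}^X_{\geq 0}$ to $G(X,S)\to\mathbb{Z}^X$ via fractions $m_1m_2^{-1}$) is standard once the monoid statement is in hand, so if you supply the $I$-type argument --- or explicitly cite it, as the paper does --- the rest of your reduction goes through.
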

  A \emph{crystallographic group} is a discrete cocompact subgroup of  the group of  isometries of $\mathbb{R}^n$.  A \emph{Bieberbach group} is a torsion-free crystallographic group. The structure group $G(X,S)$ of a non-degenerate symmetric solution $(X,S)$ with $\mid X \mid =n$ is a Bieberbach group of rank  $n$ \cite{gateva_van}. Indeed,  $G(X,S)$ acts freely on $\mathbb{R}^n$ by isometries with fundamental domain $[0,1]^n$ (see  \cite{gateva_van}, \cite{jespers_book}[p.218]). The structure groups satisfy another property,   that makes this family of groups particularly interesting. Indeed, every structure group is a \emph{Garside group} \cite{chou_art}, \cite{ddkgm}, that is a group of fractions of a cancellative monoid $M$ which is a lattice with respect to  left-divisibility and with a Garside element $\Delta$ (the left and right generators of $\Delta$ coincide, are finite in number and generate $M$). Garside groups were defined as a generalisation in some extend of the braid groups and the finite-type Artin-Tits groups \cite{DePa},  \cite{ddkgm}.
       
  \section{The structure group is not bi-orderable}\label{sec_not_biorder}
   If $(X, S)$ is  a non-degenerate symmetric set-theoretical solution, with $\mid X \mid =n$, that satisfies a certain  condition $C$, then there is a short exact sequence $1 \rightarrow N \rightarrow G(X,S)  \rightarrow W  \rightarrow 1$, where $N$ is a normal free abelian subgroup of rank $n$ and $W$ is a finite group of order $2^n$. Moreover, $W$  is  a Coxeter-like group, that is $W$ is  a finite quotient that plays the role the pure braid group $P_n$ plays in the sequence $1 \rightarrow P_n \rightarrow B_n  \rightarrow S_n  \rightarrow 1$, where $B_n$ is the braid group and $S_n$  the symmetric group or more generally the role Coxeter groups play for the finite-type Artin groups   \cite{chou_godel2}. In \cite{deh_coxeterlike}, it is proved that the condition $C$ may be relaxed and  that for each  $(X,S)$ there is a natural number $m$ such that for each $x \in X$ there is a chain of trivial relations of the form $xy_1=xy_1,\;y_1y_2=y_1y_2,\;y_2y_3=y_2y_3,...,y_{m-1}x=y_{m-1}x$, $y_i \in X$.   Here, we  call the element $y_1y_2y_3...y_{m-1}x$ the \emph{frozen element of length $m$ ending with $x$} and denote it by $\theta_x$. The subgroup $N$, generated by the $n$ frozen elements of length $m$,   $y_1y_2y_3...y_{m-1}x,\;xy_1y_2y_3...y_{m-1},\;..., y_2y_3...y_{m-1}xy_1$, is  normal, free abelian  of rank $n$ and the group $W$ defined by $G(X,S)/N$ is finite of order $m^n$ and is a  Coxeter-like  group \cite{deh_coxeterlike}.
   \begin{lem}\label{lem_conjug_frozen}
     Let $(X,S)$ be  a non-degenerate symmetric set-theoretical solution, with $\mid X \mid =n$ and with permutations $f_1,...,f_n,g_1,...,g_n$. Let   $G(X,S)$ be its structure group. Let $\theta_1,...,\theta_n$ be the $n$ frozen elements in  $G(X,S)$ and  $x_k$ be an element of $X$. Then\\  $(i)$ $x_k\theta_ix_k^{-1}=\theta_{f^{-1}_k(i)}$.\\
     $(ii)$ If $(X,S)$ is not the trivial solution, then there exists $1 \leq i \leq n$, such that $C_{G(X,S)}(\theta_i)$,   the centralizer of $\theta_i$, is not $G(X,S)$.
    \end{lem}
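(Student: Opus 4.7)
The plan is to exploit the embedding $\phi_f : G(X,S) \hookrightarrow \operatorname{Sym}(X) \ltimes \mathbb{Z}^{X}$ of Theorem \ref{prop_etingof} and to reduce both statements to a computation in the semidirect product. Recall from the Bieberbach-group picture (cited in the text just before the lemma, see \cite{gateva_van}, \cite{deh_coxeterlike}) that $N$ is the translation subgroup, so each $\phi_f(\theta_i)$ lies in the $\mathbb{Z}^{X}$ factor, i.e.\ has trivial permutation part. Once the translation vector $\pi(\theta_i)$ is identified, the conjugation formula in the semidirect product will finish everything.

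The first and only real computation is to show that $\pi(\theta_i) = m\, t_{x_i}$. Writing $\theta_i = y_1 y_2 \cdots y_{m-1} x_i$, I prove by induction on $j$ that $\pi(y_1 \cdots y_j) = j\, t_{y_j}$. The base case $j=1$ is the definition $\pi(y_1) = t_{y_1}$. For the inductive step, I apply the cocycle identity $\pi(ab) = (b^{-1} \bullet \pi(a)) + \pi(b)$ of Theorem \ref{prop_etingof}(iv) with $a = y_1 \cdots y_j$ and $b = y_{j+1}$:
\[ \pi(y_1 \cdots y_{j+1}) = y_{j+1}^{-1} \bullet (j\, t_{y_j}) + t_{y_{j+1}} = j\, t_{f_{y_{j+1}}(y_j)} + t_{y_{j+1}}, \]
and the trivial relation $y_j y_{j+1} = y_j y_{j+1}$ says exactly $f_{y_{j+1}}(y_j) = y_{j+1}$, so the right-hand side equals $(j+1)\, t_{y_{j+1}}$. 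Setting $j = m$ with $y_m = x_i$ yields $\pi(\theta_i) = m\, t_{x_i}$.

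For (i), the computation in $\operatorname{Sym}(X) \ltimes \mathbb{Z}^{X}$ is then direct: since $\phi_f(\theta_i) = m\, t_{x_i}$ has trivial permutation part, conjugation by $\phi_f(x_k) = f_k^{-1} t_{x_k}$ gives
\[ \phi_f(x_k \theta_i x_k^{-1}) = f_k^{-1} \bullet (m\, t_{x_i}) = m\, t_{f_k^{-1}(x_i)} = \phi_f(\theta_{f_k^{-1}(i)}), \]
and injectivity of $\phi_f$ yields (i). Part (ii) then follows quickly: if $(X,S)$ is non-trivial then some $f_k \neq Id_X$, so there is an index $i$ with $f_k^{-1}(i) \neq i$; since $\theta_1, \ldots, \theta_n$ generate the rank-$n$ free abelian group $N$ they form a basis and hence are pairwise distinct, so $x_k \theta_i x_k^{-1} = \theta_{f_k^{-1}(i)} \neq \theta_i$ and $x_k \notin C_{G(X,S)}(\theta_i)$.

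The main delicate point is the assertion used in Step~2 that $\phi_f(\theta_i)$ has trivial permutation part — equivalently, that $N$ lies in $\ker\bigl(G(X,S) \to \operatorname{Sym}(X)\bigr)$ — which I would justify by the Bieberbach description of $G(X,S) \subset \operatorname{Isom}(\mathbb{R}^n)$ recalled in the text, identifying $N$ with the translation subgroup. Everything else is a bookkeeping exercise with the 1-cocycle formula and the trivial relations that define the frozen elements.
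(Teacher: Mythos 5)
Your proof is correct and follows essentially the same route as the paper: both arguments reduce the conjugation identity to a computation with the bijective $1$-cocycle $\pi$ in $\operatorname{Sym}(X)\ltimes\mathbb{Z}^{X}$, resting on the two facts that $\pi(\theta_i)=m\,t_i$ and that the frozen elements act trivially on $\mathbb{Z}^{X}$ (you conjugate inside the semidirect product and use injectivity of $\phi_f$, the paper compares $\pi(x_k\theta_i)$ with $\pi(\theta_{f_k^{-1}(i)}x_k)$ and uses bijectivity of $\pi$ --- the same computation). The only differences are minor: you rederive $\pi(\theta_i)=m\,t_i$ by induction on the trivial relations, whereas the paper cites it from \cite{deh_coxeterlike}, and the trivial-permutation-part fact that you defer to the Bieberbach/translation-subgroup picture is precisely the statement the paper also takes from that same source.
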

      \begin{proof}
      $(i)$ We show $\pi(x_k\theta_i)=\pi(\theta_{f^{-1}_k(i)}x_k)$, where $\pi: G(X,S) \rightarrow \mathbb{Z}^{X}$ is the bijective $1-$cocycle defined in Theorem \ref{prop_etingof}. Indeed, $\pi(x_k\theta_i)= \theta_i^{-1}\bullet\pi(x_k)+\pi(\theta_i)=\pi(x_k)+\pi(\theta_i)=t_k +mt_i$, using first Theorem \ref{prop_etingof}$(iv)$, next the fact that the frozen elements act trivially on $\mathbb{Z}^n$ and that $\pi(\theta_i)=mt_i$ (see \cite{deh_coxeterlike} lemma 3.2 or \cite{chou_coxeter} lemma 1.12 written multiplicatively there). In the same way,  $\pi(\theta_{f^{-1}_k(i)}x_k)= x_k^{-1}\bullet \pi(\theta_{f^{-1}_k(i)}) +\pi(x_k)= x_k^{-1}\bullet (mt_{f^{-1}_k(i)}) +t_k=mt_{f_k(f^{-1}_k(i))} +t_k=mt_i+t_k$.\\
       $(ii)$ From $(i)$,  $C_{G(X,S)}(\theta_i)=G(X,S)$ for all  $1 \leq i \leq n$, if and only if $f^{-1}_k(i)=i$ for all  $1 \leq i,k \leq n$, that is if and only if $(X,S)$ is  the trivial solution.
      \end{proof} 
      An element $g \neq 1$ of a group $G$ is called a \emph{generalized torsion element} if there exist $h_i \in G$ such that $\prod\limits_{i=1}^{n} h_igh_i^{-1}=1$. A group with generalized torsion element cannot be bi-orderable.
  \begin{proof} \emph{of Theorem $1$}
   Assume by contradiction there exists   a total ordering $<$ of $G(X,S)$  invariant under left and right multiplication. Let $\theta_1,...,\theta_n$ denote the $n$ frozen elements in  $G(X,S)$. From lemma \ref{lem_conjug_frozen}, there exits  $x_k \in X$ and a frozen element $\theta_i$ such that $x_k$ and $\theta_i$ do not commute and  furthermore, $x_k\theta_ix_k^{-1}=\theta_{f^{-1}_k(i)}$. 
   With no loss of generality, suppose $\theta_i\,<\,\theta_{f^{-1}_k(i)}$. So, from our assumption,   $x_k\theta_ix_k^{-1}\,<\,x_k\,\theta_{f^{-1}_k(i)}\,x_k^{-1}$, that is $\theta_{f^{-1}_k(i)}\, <\,\theta_{f^{-2}_k(i)}$ from lemma \ref{lem_conjug_frozen}$(i)$. Inductively,  $\theta_i\,<\,\theta_{f^{-1}_k(i)}\, <\,\theta_{f^{-2}_k(i)}\,<...$. But $f_k$ has finite order, so there exists a natural number $p$ such that $\theta_{f^{-p}_k(i)}=\theta_i$ and this  contradicts our assumption.
   The commutator $[x_k,\theta_i]$ is a generalized torsion element, since on one hand $[x_k,\theta_i]\neq 1$ and $[x_k^p,\theta_i]=1$ and on second hand $[x_k^p,\theta_i]$
  is the product of conjugates of $[x_k,\theta_i]$, using $[x^m,y]=x[x^{m-1},y]x^{-1}[x,y]$. 
   \end{proof}
\section{A family of structure groups with a recurrent  left order}
 Let $G(X,S)$ be the structure group of a non-degenerate,  symmetric  set-theoretical solution $(X,S)$,  $\mid X \mid =n$.  Let $\epsilon: G(X,S)\rightarrow\mathbb{Z}$ denote the augmentation map,  defined by $\epsilon(x_{i_1}^{m_1}x_{i_2}^{m_2}...x_{i_k}^{m_k})=\sum_{i=1}^{i=k} m_i$. The map $\epsilon$ is an epimorphism (from the form of the defining relations), so $G(X,S)$  is indicable and there is an exact sequence $1 \rightarrow K\rightarrow G(X,S)\rightarrow \mathbb{Z}\rightarrow 1$, with $K=ker(\epsilon)$. 
 Assume  $(X,S)$ satisfies  additionally: all the permutations $f_1,...,f_n$ are equal.  We show that for such solutions the restriction of  the bijective 1-cocycle $\pi:G(X,S)\rightarrow \mathbb{Z}^n$ to the normal subgroup $K$  is an isomorphism of groups. Note that for each $n$, such solutions exist and if all the $f_i$, $1 \leq i \leq n$, are equal to a cycle of length $n$, the solution is called a \emph{permutation solution} \cite{etingof}.
 \begin{prop}\label{prop_permut_sol}
 Let $G(X,S)$ be the structure group of a non-degenerate,  symmetric  (non trivial) set-theoretical solution $(X,S)$, $\mid X \mid =n$, such that all the permutations $f_1,...,f_n$ are equal. Let $K$ denote the kernel of  $\epsilon: G(X,S)\rightarrow\mathbb{Z}$. Let $LO(G(X,S))$ denote the space of left orders of $G(X,S)$. Assume $n\geq 3$. Then\\
 $(i)$ The restriction of $\pi$ to  $K$  is an isomorphism of groups and $K \simeq \mathbb{Z}^{n-1}$. \\
 $(ii)$ The normal subgroup $K$ is convex with respect to an infinite number of left orders.\\
 $(iii)$ There exists a recurrent left order in $LO(G(X,S))$.\\
 $(iv)$ $LO(G(X,S))$ is homeomorphic to the Cantor set with  no isolated element. \\
  $(v)$ Every  left order of  $G(X,S)$ is  Conradian.
 \end{prop}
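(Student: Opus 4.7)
I plan to establish (i) by a direct computation with the bijective $1$-cocycle $\pi$, (ii) by a lexicographic construction using the short exact sequence $1\to K\to G(X,S)\to\mathbb{Z}\to 1$, (iii) by showing that the orders from (ii) are themselves recurrent, (iv) by combining (ii) with the Rivas dichotomy for virtually solvable groups recalled in Section~1, and (v) by exploiting that the $G/K$-action on $K$ has finite order so that $G(X,S)$ is virtually $\mathbb{Z}^n$. The first four parts are essentially bookkeeping; part (v) is the main obstruction.

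\textbf{Parts (i) and (ii).} Under the hypothesis that all $f_i$ coincide with a common $f$, Theorem~\ref{prop_etingof}(i) says each $g\in G(X,S)$ acts on $X$ by $f^{-\epsilon(g)}$. For $a_2\in K$ this action is trivial, so the cocycle identity of Theorem~\ref{prop_etingof}(iv) collapses to $\pi(a_1a_2)=\pi(a_1)+\pi(a_2)$ on $K$. Writing $\sigma\colon\mathbb{Z}^X\to\mathbb{Z}$ for the coordinate-sum map, which is invariant under coordinate permutations, one checks that $\sigma\circ\pi$ is a homomorphism agreeing with $\epsilon$ on generators, hence $\sigma\circ\pi=\epsilon$, so $\pi(K)=\ker\sigma\cong\mathbb{Z}^{n-1}$; bijectivity of $\pi$ then upgrades $\pi|_K$ to the desired isomorphism, proving (i). For (ii), given any left order $\prec_K$ on $K$, set $P=\{g:\epsilon(g)>0\}\cup\{k\in K:k\succ_K 1\}$. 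Semigroup closure and the partition $G(X,S)=P\sqcup P^{-1}\sqcup\{1\}$ are immediate from $\epsilon$ being a homomorphism, and left-invariance is automatic because $(fg)^{-1}(fh)=g^{-1}h$; in the resulting order $K$ is convex. Since $n-1\geq 2$, $LO(K)\cong LO(\mathbb{Z}^{n-1})$ is uncountable, so this yields uncountably many such orders on $G(X,S)$.

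\textbf{Parts (iii) and (iv).} For recurrence of the lex orders above, fix $g\in G(X,S)$ and a finite chain $h_1\prec\cdots\prec h_r$. The comparison $(h_jg^N)^{-1}(h_{j+1}g^N)=g^{-N}(h_j^{-1}h_{j+1})g^N$ is a conjugate of the fixed element $h_j^{-1}h_{j+1}$. Conjugation by $g$ acts on $K$ by the permutation $f^{-\epsilon(g)}$ of coordinates (computed via $\phi_f$), so conjugation by $g^N$ restricts to the identity on $K$ whenever $N\epsilon(g)\equiv 0\pmod{|f|}$, an infinite arithmetic progression of $N$'s. Along such $N$ the differences $h_j^{-1}h_{j+1}\in K$ are left fixed, while differences $h_j^{-1}h_{j+1}\notin K$ stay in $P$ for every $N$ because $K$-convexity makes the $\epsilon$-part decide positivity; this proves (iii). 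For (iv), $G(X,S)$ is polycyclic hence virtually solvable, so the Rivas dichotomy says $LO(G(X,S))$ is either finite or a Cantor set, and (ii) rules out the first case.

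\textbf{Part (v): the main obstruction.} Let $m$ be the order of $f$ in $\operatorname{Sym}(X)$. Then $x_1^m$ acts trivially on $X$, centralises $K$ by the computation in (iii), and in fact is central in $G(X,S)=\langle K,x_1\rangle$, so $\langle K,x_1^m\rangle\cong\mathbb{Z}^n$ is a finite-index central subgroup and $G(X,S)$ is virtually free abelian. Given a left order $\prec$ on $G(X,S)$ and positive $a,b$, my plan is to produce the Conradian witness $N$ with $b\prec ab^N$ by choosing $N$ a large multiple of $m$: then $b^N$ and $c:=ab^ma^{-1}$ both lie in $\mathbb{Z}^n$, the identity $ab^{mN}=c^Na$ reduces the inequality to one between an element of $\mathbb{Z}^n$ and $ba^{-1}$, and the bi-invariance of $\prec$ on the abelian finite-index subgroup delivers the result once $N$ is large enough for $c^N$ to overcome $ba^{-1}$. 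The subtle step, which I expect will require the bulk of the work, is handling the case in which $b$ lies in a proper convex subgroup of $G(X,S)$, so that $b^N$ does not grow without bound in the order; there one has to descend to a jump-factor in the chain of convex subgroups and exploit the finite $G/K$-action on $K$ to argue Archimedean behaviour on each jump, from which the Conradian condition then follows from the standard criterion that every jump factor of $\prec$ be order-embedded into $\mathbb{R}$.
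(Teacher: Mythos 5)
Parts (i), (ii) and (iv) are correct and essentially the paper's own argument (cocycle identity collapsing on $K$, lexicographic orders from $1\to K\to G(X,S)\to\mathbb{Z}\to 1$, Rivas's dichotomy for virtually solvable groups). Your part (iii) is a genuinely different and valid route: the paper simply cites the theorem of Morris--Witte that a solvable left-orderable group admits a recurrent left order, whereas you verify recurrence directly for the lexicographic orders of (ii), using that conjugation by $g^{N}$ acts on $K$ through the coordinate permutation $f^{N\epsilon(g)}$ and is therefore the identity along an arithmetic progression of $N$'s. This is more informative (it exhibits which orders are recurrent) at the cost of being tied to the specific orders constructed in (ii); both arguments are sound.

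Part (v), however, has a genuine gap, and you flag it yourself. Your reduction ``$b\prec ab^{mN}=c^{N}a$ becomes a comparison settled by bi-invariance on the abelian finite-index subgroup'' does not go through as stated: the order is only left-invariant, so from $b\prec c^{N}a$ you may pass to $1\prec b^{-1}c^{N}a$, but you cannot cancel the $a$ on the right to compare $c^{N}$ with $a^{-1}b$; the fact that $\prec$ restricted to $A=\langle K,x_1^{m}\rangle\cong\mathbb{Z}^{n}$ is bi-invariant \emph{as an order on $A$} does not permit right multiplication of inequalities in $G(X,S)$ by elements of $A$. The case you defer --- $b$ lying in a proper convex subgroup, so that $b^{N}$ does not dominate --- is exactly where the Conradian condition could fail, and the ``descend to a jump and argue Archimedean behaviour'' step is the entire content of the claim, not a routine verification. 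What closes the argument is the finite-index criterion for Conradian orders: a left order on $G$ is Conradian as soon as its restriction to a finite-index subgroup is Conradian (equivalently, as soon as it is right-invariant under a finite-index subgroup), see \cite{deh-book-order}[p.288]. Your subgroup $A\cong\mathbb{Z}^{n}$ is precisely the right witness --- the restriction of any left order to an abelian subgroup is trivially Conradian --- so replacing your direct manipulation by an appeal to that criterion yields the paper's one-line proof of (v).
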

 \begin{proof}
  $(i)$ Let $a_1,a_2 \in K$. From Thm. \ref{prop_etingof},  $\pi(a_1a_2)=(a_2^{-1}\bullet \pi(a_1))\,+\pi(a_2)$. As $\epsilon(a_2^{-1})=\epsilon(a_2)=0$ and $f_1,...,f_n$ are equal to some $f$, $a_2$ acts trivially  on $\mathbb{Z}^n$, so $\pi(a_1a_2)= \pi(a_1)\,+\pi(a_2)$. As, $\pi$ is also bijective, it is  an isomorphism of groups and clearly $K \simeq \mathbb{Z}^{n-1}$. \\
  $(ii)$ Since $1 \rightarrow K\rightarrow G(X,S)\rightarrow^{\epsilon} \mathbb{Z}\rightarrow 1$ and $K$, $\mathbb{Z}$  are (left-)orderable,  $G(X,S)$ is left-orderable \cite{book-order}[p.26]. Furthermore, each  (left-)order of $K$ induces a left order of $G(X,S)$ in the following way:  given $g,h \in G(X,S)$,  $g \prec h$ if $\epsilon(g) \prec_{\mathbb{Z}}\epsilon(h)$ and if $\epsilon(g) =\epsilon(h)$, $0 \prec_{K} g^{-1}h$.   As there is an uncountable number of orders in $K$ \cite{book-order}[p.43] and each order of $K$ induces  a left order of $G(X,S)$, there is an infinite number of  left orders of $G(X,S)$ and from the definition of $\prec$, $K$ is  convex (indeed, if $x \prec y \prec z$ and $x,z \in K$, then $\epsilon(y)=0$).\\
  $(iii)$ $G(X,S)$  is solvable \cite{etingof} and  left-orderable, so it has a recurrent left order \cite{morris}. \\
  $(iv)$ The space of left orders of  a countable
  (virtually) solvable group is either finite or homeomorphic to the Cantor
  set \cite{rivas}, so  $LO(G(X,S))$ is homeomorphic to  the Cantor set.   Furthermore, for a countable group $G$, the space $LO(G)$ is homeomorphic to the Cantor set if and only if it is nonempty and no left order is isolated \cite{deh-book-order}[p.267], so no  left  order in $LO(G(X,S))$ is isolated. Note, for $n \geq 2$, $\mathbb{Z}^n$ admits no isolated orders \cite{sikora}.\\
  $(v)$ Every left order of $G(X,S)$  is also right invariant on a  subgroup of finite index (the free abelian  subgroup of finite index $N$ from Section \ref{sec_not_biorder}), so it is Conradian \cite{deh-book-order}[p.288]. 
  \end{proof}
 \begin{rem}
In case $n=2$, there are exactly two solutions: the trivial one with structure group $\mathbb{Z}^2$ and the permutation solution with structure group the Klein-bottle group (presented by $\langle a,b\mid a^2=b^2 \rangle$). The construction of left orders from Proposition  \ref{prop_permut_sol} works also in this case, but as $K=\mathbb{Z}$, there are only four left orders  induced, which are Conradian. These are all the only left orders and they are  isolated (see \cite{deroin}[p.54] for details). The question arises whether,  for $n \geq 3$, the  construction of left orders from Proposition \ref{prop_permut_sol} provides also all the left orders.
 \end{rem} 
 Let  $(X,S)$ be a  non-degenerate,  symmetric  solution with structure group $G(X,S)$. A \emph{retract relation} $\equiv$ on $X$ is a  congruence relation defined by $x_i \equiv x_j$ if and only if  $f_i=f_j$. The quotient group $G(X,S)/\equiv$ is denoted by $\operatorname{Ret}^1(G)$ and it is also  the structure group of a non-degenerate,  symmetric  solution with set  $X/\equiv$ and function $S/\equiv$ induced accordingly. The kernel of the canonical homomorphism $G(X,S) \rightarrow G(X,S)/\equiv $ is a finitely generated torsion-free abelian group \cite{jespers_book}[p.222]. For any integer $m \geq 1$,  $\operatorname{Ret}^{m+1}(G)=\,\operatorname{Ret}^1(\operatorname{Ret}^m(G))$. The  solution  $(X,S)$  is  called \emph{retractable} if there exits  $m \geq 1$ such that $\operatorname{Ret}^m(G)$ is a cyclic group and if $m$ is the smallest such integer,  $(X,S)$  is  called \emph{retractable (or multipermutation solution) of level} $m$. A  solution $(X,S)$, $\mid X \mid =n$, for which  all the permutations $f_1,...,f_n$ are equal is retractable  of level $1$. We refer to \cite{etingof}, \cite{jespers_book} for details. 
  \begin{proof} \emph{of Theorem $2$}
  $(i),(ii),(iii)$
  Assume $(X,S)$ is retractable of level $m$. The proof is by induction on $m$. The case  $m=1$ is proved in Prop. \ref{prop_permut_sol}. For $m \geq 2$, we have the exact sequence $1 \rightarrow \operatorname{Ker}(\equiv) \hookrightarrow G(X,S) \rightarrow G(X,S)/ \equiv \,\rightarrow 1$, where   $G(X,S)/ \equiv$ is a retractable solution of level $m-1$.
 Since $\operatorname{Ker}(\equiv)$  is a torsion free abelian subgroup of $G(X,S)$, it is a (left-)orderable group \cite{levi} and from the induction assumption, $G(X,S)/ \equiv$ is left-orderable, so $G(X,S)$ is also left-orderable \cite{book-order}[p.26]. Since each left order in   $G(X,S)$ is induced by a Conradian left order in $G(X,S)/ \equiv$ and all the orders in $\operatorname{Ker}(\equiv)$  are Conradian, $G(X,S)$ has also an infinite  set of Conradian left orders. 
 As $G(X,S)$  is solvable \cite{etingof}, it   has a recurrent left order \cite{morris} and   $LO(G(X,S))$ is homeomorphic to  the Cantor set \cite{rivas}. 
   \end{proof} 
   Some remarks to conclude. There are the following implications: Bi-orderable $\Rightarrow$ Recurrent left-orderable  $\Rightarrow$ Locally indicable $\Rightarrow$ Left-orderable $\Rightarrow$ Diffuse  $\Rightarrow$ Unique product $\Rightarrow$ Torsion-free and  Unique product $\Rightarrow$  Kaplansky's  Unit conjecture satisfied (the units in the group algebra are trivial) $\Rightarrow$  Kaplansky's Zero-divisor  conjecture satisfied (there are no zero divisors in the group algebra) $\Rightarrow$  Kaplansky's  Idempotent conjecture satisfied  (there are no non-trivial idempotents in the group algebra).
 We found there are no (non-trivial) solutions with structure group bi-orderable, and all the retractable solutions admit a  recurrent  left order. For non-retractable  solutions, the kernel of  $\epsilon: G(X,S)\rightarrow\mathbb{Z}$ is not necessarily a free abelian group and the methods from Proposition \ref{prop_permut_sol} cannot be applied, so, the question arises whether there exist structure groups of  non-retractable  solutions that are left-orderable.  In fact, there exist  non-retractable  solutions with   structure group  a non unique product group. Indeed,  E. Jespers and J. Okninski  give an example of structure group of a non-retractable  solution (with $n=4$) which is not a unique product group  \cite{jespers_book}[p.224], as they prove this group contains a subgroup isomorphic to the Promislow group. The Promislow group is  a non unique product Bieberbach group \cite{promislow}, which does not belong to the class of the structure groups \cite{jespers_book}[p.224]. 
  So,  this answers in the negative  the  question whether every Garside group is left-orderable and furthermore provides an example of non unique product Garside group. So, being a Garside group does not imply being locally indicable, nor left-orderable, nor unique product, but a Garside group is necessarily torsion free \cite{deh_torsion}. 
  
  The structure group of a non-degenerate, symmetric solution enjoys  another interesting particularity, it is a Bieberbach group \cite{gateva_van}, \cite{jespers_book}, that is it is a torsion free crystallographic group.  A classification of Bieberbach groups of small dimension (up to four) in relation with the existence of a left order is given in \cite{rimbaut}[p.13]. Bieberbach groups  satisfy Kaplansky's  zero divisor conjecture, as it holds  for all torsion-free finite-by-solvable groups \cite{zerodiv}. As the braid groups are left-orderable, they also  satisfy the zero divisor conjecture, so  Question $3.3$ from \cite{deh-book-order} might be replaced by: does a Garside group satisfy  Kaplansky's  zero divisor conjecture? It is still unknown whether Kaplansky's  unit conjecture holds in Bieberbach groups. D. Craven and P. Pappas study the question whether the unit conjecture holds in the Promislow group  (also called  Passman group) (see \cite{craven} for some preliminary results). 
  
  Amongst the $23$ solutions with $n=4$, there are two  non-retractable (indecomposable) solutions and the structure group of one of them is presented by $\operatorname{Gp}\langle x_1,x_2,x_3,x_4 \mid x_1^2=x_2^2\,,\, x_1x_2=x_3^2\,,\,x_2x_1=x_4^2\,,\,x_1x_3=x_4x_1\,,\,x_2x_4=x_3x_2\,,\,x_3x_4=x_4x_3\rangle$ with $g_1=(1,2,3,4),$ \\$g_2=(1,4,3,2), g_3=(1,3), g_4=(2,4); f_1=(1,2,4,3), f_2=(1,3,4,2), f_3=(2,3), f_4=(1,4)$. The second one is given in  \cite{jespers_book}[p.224]. If a  decomposable solution contains a  non-retractable solution with non unique product structure group, then it is  also non-retractable and it has  non unique product structure group. So, the question is what about the class   of indecomposable  non-retractable  solutions? For $5\leq n \leq 7$, all the  indecomposable  solutions are retractable and for $n=8$, amongst the $34528$ solutions, there are $47$ indecomposable  non-retractable  solutions \cite{etingof}. As the structure groups have a wide range of behaviours, the intriguing question is whether  amongst the indecomposable  non-retractable  solutions, there  are special cases of groups. More specifically, can we find  there groups that are left-orderable, or diffuse? Or, groups that are unique product but not left-orderable? Or,  diffuse but not left-orderable?

\bigskip\bigskip\noindent
{ Fabienne Chouraqui,}\smallskip\noindent
University of Haifa at Oranim, Israel.
\smallskip\noindent\\
E-mail: {\tt fabienne.chouraqui@gmail.com,  fchoura@sci.haifa.ac.il}

\end{document}